\newtheorem{theorem}{Theorem}[section]
\newtheorem{definition}{Definition}[section]
\theoremstyle{remark}
\newtheorem{remark}{Remark}[section]
\numberwithin{equation}{section}
\begin{document}

\title{Initial Coefficients of  Bi-univalent Functions}

\author[S. K. Lee]{See Keong Lee }

\address{School of Mathematical Sciences,
Universiti Sains Malaysia, 11800 USM, Penang, Malaysia}
\email{ sklee@cs.usm.my }

\author[V. Ravichandran]{V. Ravichandran}

\address{School of Mathematical Sciences,
Universiti Sains Malaysia, 11800 USM, Penang, Malaysia and
  Department of Mathematics, University of Delhi,
Delhi--110 007, India}
\email{ vravi@maths.du.ac.in}

\author[S. Supramaniam]{Shamani Supramaniam}

\address{ School of Mathematical Sciences,
Universiti Sains Malaysia, 11800 USM, Penang, Malaysia}
\email{sham105@hotmail.com}

\begin{abstract} An analytic function $f$  defined on the open unit disk $\mathbb{D}=\{z:|z|<1\}$ is bi-univalent if the function $f$ and its inverse $f^{-1}$ are univalent in $\mathbb{D}$. Estimates for the initial coefficients of bi-univalent functions $f$ are  investigated when $f$ and $f^{-1}$ respectively belong to some subclasses of univalent functions. Some earlier results are shown to be special cases of our results.
\end{abstract}

\keywords{Univalent functions, bi-univalent
functions\, bi-starlike functions, bi-convex functions, subordination.}

\subjclass[2010]{Primary: 30C45, 30C50; Secondary: 30C80}

\maketitle

\section{Introduction} Let $\mathcal{S}$ be the class of all univalent analytic
functions $f$ in the open unit disk $\mathbb{D} : = \{ z \in
\mathbb{C} : |z| < 1 \}$ and normalized by the conditions $f(0)= 0$
and $f'(0)= 1$.  For $f\in\mathcal{S}$, it is well known that the $n$th coefficient is
bounded by $n$. The bounds for the coefficients give  information
about the geometric properties of these functions. Indeed, the
bound for the second coefficient of functions in the class $\mathcal{S}$
gives rise to growth, distortion, covering theorems for univalent
functions. In view of the influence of the second coefficient in the geometric properties of univalent
functions, it is important to know the bounds for the (initial) coefficients of functions
belonging to various subclasses of univalent functions. In this paper, we investigate this coefficient problem for certain subclasses of bi-univalent functions.

Recall that the Koebe one-quarter theorem \cite{duren}  ensures
that the image of $\mathbb{D}$ under every univalent function
$f\in\mathcal{S}$ contains a disk of radius 1/4. Thus every
univalent function $f$ has an inverse $f^{- 1}$ satisfying $f^{-
1}(f(z))= z$, $(z \in \mathbb{D})$ and
\[ f(f^{- 1}(w))= w, \quad  \left(|w| < r_0(f), r_0(f)
   \geq 1 / 4 \right). \]
A function $f \in \mathcal{S}$ is   \emph{bi-univalent} in $\mathbb{D}$ if both $f $ and $f^{- 1}$ are univalent in $\mathbb{D}$. Let $\sigma$
denote the class of bi-univalent functions defined in the unit disk $\mathbb{D}$.
Lewin {\cite{lewin}} investigated this class $\sigma$ and obtained the bound for the second coefficient of the bi-univalent functions. Several authors subsequently studied similar problems in this direction (see {\cite{brankir,netan69}}). A function $f\in \sigma$ is bi-starlike or strongly bi-starlike or bi-convex of order $\alpha$ if $f$ and $f^{-1}$ are both starlike, strongly starlike or convex of order $\alpha$, respectively. Brannan and Taha {\cite{brantaha}} obtained estimates for the initial coefficients of bi-starlike,strongly bi-starlike and bi-convex functions. Bounds for the initial coefficients of several classes of functions were also investigated in
\cite{rma1,rma2,pon1,pon2,fra,mish,sprasad,vravi,sri,sriv,xu1,xu2}.


An analytic function $f$ is\emph{ subordinate} to an analytic function $g$, written $f(z)\prec g(z)$, if  there is an analytic function $w:\mathbb{D}\rightarrow \mathbb{D}$ with $w(0)= 0$ satisfying $f(z)= g(w(z))$. Ma and Minda {\cite{mamin}} unified various subclasses of
starlike ($\mathcal{S^*}$) and convex functions ($\mathcal{C}$) by requiring that either of the quantity $zf'(z)/ f(z)$ or $1 + zf''(z)/ f'(z)$ is subordinate to a more general
superordinate function  $\varphi$ with positive real part in the unit disk $\mathbb{D}$,
$\varphi(0)= 1$, $\varphi'(0)> 0$,   $\varphi$ maps $\mathbb{D}$ onto a region starlike with respect to $1$ and symmetric with respect to the
real axis. The class $\mathcal{S^*}(\varphi)$ of Ma-Minda starlike functions with respect to $\varphi$ consists of functions $f \in \mathcal{S}$ satisfying the subordination $zf'(z)/ f(z)\prec
\varphi( z)$. Similarly, the class $\mathcal{C}(\varphi)$ of Ma-Minda convex functions consists of functions $f \in \mathcal{S}$ satisfying the subordination $1 +
zf''(z)/ f'(z)\prec \varphi(z)$. Ma and Minda investigated growth and distortion properties of functions in $\mathcal{S^*}(\varphi)$ and $\mathcal{C}(\varphi)$ as well as  Fekete-Szeg\"{o} inequalities  for $\mathcal{S^*}(\varphi)$ and $\mathcal{C}(\varphi)$. Their proof of  Fekete-Szeg\"{o} inequalities  requires the univalence of $\varphi$. Ali {\it et al.} \cite{rma2} have investigated Fekete-Szeg\"{o}\ problems for various other classes and their proof does not require the univalence or starlikeness of $\varphi$. In particular, their results are valid even if one just assume the function $\varphi$ to have a series expansion of the form $\varphi(z)= 1 + B_1 z + B_2 z^2 +  \cdots$, $B_1 >0$. So in this paper, we assume that $\varphi$ has series expansion $\varphi(z)= 1 + B_1 z + B_2 z^2 +  \cdots$, $B_1$, $B_2$ are real and $B_1 >0$.
A function $f$ is Ma-Minda bi-starlike or   Ma-Minda bi-convex  if both $f$ and $f^{- 1}$ are
respectively Ma-Minda starlike or convex. Motivated by the Fekete-Szeg\"o problem for the classes of  Ma-Minda starlike and Ma-Minda convex functions \cite{mamin},  Ali \emph{et al.}\ {\cite{rmabi}} recently obtained estimates of the initial coefficients for bi-univalent Ma-Minda starlike and Ma-Minda convex functions.

The present work is motivated  by the results of K\c edzierawski \cite{ked} who considered functions $f$ belonging to certain subclasses of univalent functions while its inverse $f^{-1}$ belongs to some other subclasses of univalent functions. Among other results, he obtained  the following coefficient estimates.

\begin{theorem}{\rm\cite{ked}} \label{ked1.1} Let $f\in\sigma$ with Taylor series $f(z)=z+a_2z^2+\cdots$ and $g=f^{-1}$. Then
\[ |a_2|\leq
\begin{cases}
 1.5894 & \text{ if $f\in\mathcal{S}$,    $g\in\mathcal{S}$},\\
    2      & \text{ if $f\in\mathcal{S^*}$, $g\in\mathcal{S^*}$} ,\\
  1.507 & \text{ if $f\in\mathcal{S^*}$, $g\in\mathcal{S}$},\\
  1.224 & \text{ if $f\in\mathcal{C}$,  $g\in\mathcal{S}$}.
  \end{cases}
  \]
\end{theorem}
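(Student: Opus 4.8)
The plan is to reduce all four cases to the classical Fekete--Szeg\"o inequality for $\mathcal{S}$ together with its elementary analogues for $\mathcal{S}^*$ and $\mathcal{C}$. Write $f(z)=z+a_2z^2+a_3z^3+\cdots$; then $g=f^{-1}$ has expansion $g(w)=w+b_2w^2+b_3w^3+\cdots$ with $b_2=-a_2$ and $b_3=2a_2^2-a_3$. The two inputs I would use are: (i) the Fekete--Szeg\"o theorem, which for $h\in\mathcal{S}$ with second and third coefficients $\alpha$, $\beta$ gives $|\beta-\lambda\alpha^2|\le\Phi(\lambda):=1+2\exp\!\bigl(-2\lambda/(1-\lambda)\bigr)$ for $\lambda\in[0,1)$ (so $\Phi(0)=3$); and (ii) the bounds obtained by writing $zf'(z)/f(z)=1+c_1z+c_2z^2+\cdots$ with $|c_n|\le2$, which yield $a_2=c_1$ and $|a_3-\mu a_2^2|\le 3-4\mu$ for $0\le\mu\le\tfrac12$ (and $\le1$ for $\tfrac12\le\mu\le1$) when $f\in\mathcal{S}^*$, and, from $1+zf''(z)/f'(z)=1+c_1z+\cdots$, $a_2=c_1/2$ and $|a_3-\mu a_2^2|\le1-\mu$ for $0\le\mu\le\tfrac23$ when $f\in\mathcal{C}$.

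For the three cases in which $g\in\mathcal{S}$ I would apply (i) to $g$: since $b_3-\nu b_2^2=(2-\nu)a_2^2-a_3$, one gets $|(2-\nu)a_2^2-a_3|\le\Phi(\nu)$ for $\nu\in[0,1)$. If $|a_3-\mu a_2^2|\le M(\mu)$ is whatever bound is available for $f$ (from (i) if $f\in\mathcal{S}$, from (ii) if $f\in\mathcal{S}^*$ or $f\in\mathcal{C}$), then subtracting eliminates $a_3$:
\[
(2-\mu-\nu)\,|a_2|^2\le M(\mu)+\Phi(\nu),
\]
so that $|a_2|^2\le\bigl(M(\mu)+\Phi(\nu)\bigr)/(2-\mu-\nu)$ for every admissible $\mu,\nu$ with $\mu+\nu<2$. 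Everything then comes down to minimising this quotient.

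For $f\in\mathcal{C}$ one checks that the quotient is nondecreasing in $\mu$ on $[0,\tfrac23]$ (because $\Phi(\nu)\ge1$), so $\mu=0$, $M(0)=1$ is optimal, and one is left to minimise $\bigl(1+\Phi(\nu)\bigr)/(2-\nu)$; the substitution $s=\nu/(1-\nu)$ turns this into minimising $2(1+s)(1+e^{-2s})/(2+s)$ over $s\ge0$, whose critical point is a transcendental equation to be solved numerically, giving $|a_2|^2\le1.497\ldots$ and hence $|a_2|\le1.224$. For $f\in\mathcal{S}^*$ the same analysis forces $\mu=\tfrac12$ (where $M=1$), leaving the minimisation of $\bigl(1+\Phi(\nu)\bigr)/(\tfrac32-\nu)$, which gives $|a_2|^2\le2.27\ldots$ and $|a_2|\le1.507$. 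For $f\in\mathcal{S}$, $g\in\mathcal{S}$ one uses (i) for both functions, i.e.\ $M=\Phi$, and minimises $\bigl(\Phi(\lambda)+\Phi(\nu)\bigr)/(2-\lambda-\nu)$; since $\Phi'$ is strictly monotone, any interior critical point has $\lambda=\nu$, reducing the problem to minimising $(1+s)(1+2e^{-2s})$ over $s\ge0$, whose minimum $2.526\ldots$ yields $|a_2|\le1.5894$. Finally, $|a_2|\le2$ for $f\in\mathcal{S}^*$, $g\in\mathcal{S}^*$ is immediate from $a_2=c_1$, $|c_1|\le2$; the hypothesis on $g$ is not needed for this (non-sharp) estimate.

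The main obstacle is entirely the optimisation: each extremal problem is piecewise, because the Fekete--Szeg\"o bounds for $\mathcal{S}^*$ and $\mathcal{C}$ are piecewise, and transcendental, because of the $\exp(-2\lambda/(1-\lambda))$ term; this is exactly why the constants in the statement are these irregular numbers rather than round ones. In carrying it out one must also check, in each case, that the optimal pair $(\mu,\nu)$ really does satisfy $\mu+\nu<2$, so that the displayed inequality is being used in the direction that bounds $|a_2|$ from above, and that the optimal parameters lie in the interval on which the chosen piece of $M$ is valid.
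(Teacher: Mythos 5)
Your argument is correct, but note that the paper itself gives no proof of this statement: Theorem \ref{ked1.1} is quoted from K\c edzierawski \cite{ked} and used only as motivation, so there is no in-paper proof to match. Your route --- applying a Fekete--Szeg\"o-type bound $|a_3-\mu a_2^2|\le M(\mu)$ to $f$ and the sharp Fekete--Szeg\"o inequality to $g=f^{-1}$ (using $b_2=-a_2$, $b_3=2a_2^2-a_3$, so that $b_3-\nu b_2^2=(2-\nu)a_2^2-a_3$), adding to eliminate $a_3$, and then optimising over $(\mu,\nu)$ --- is essentially K\c edzierawski's original method, and it is genuinely different from the machinery this paper uses for its own Theorems \ref{the1}--\ref{the6}, which extract coefficient identities from the subordination $\varphi((p_1-1)/(p_1+1))$ and invoke only $|b_2|,|c_2|\le 2$ for Carath\'eodory functions. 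The subordination approach cannot reach the three cases with $g\in\mathcal{S}$ (the full class $\mathcal{S}$ has no Ma--Minda-type characterization), whereas your approach handles them; conversely, the paper's method gives sharper constants for the purely subordination-defined classes (e.g.\ $\sqrt{2}$ rather than $2$ for bi-starlike functions). Your numerical values check out ($2.526\ldots$, $2.270\ldots$, $1.497\ldots$ for the squared bounds), and a helpful simplification you could make explicit is that optimality of $(\mu,\nu)$ is not actually needed: every admissible pair yields a valid upper bound, so exhibiting the near-optimal pairs already proves the stated constants, and the monotonicity/critical-point discussion only certifies that nothing better comes out of this method.
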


We need the following classes investigated in \cite{rmabi,rma1,rma2}.
\begin{definition}
Let $\varphi:\mathbb{D}\rightarrow \mathbb{C}$ be analytic and $\varphi(z)=1+B_1z+B_2z^2+\cdots$ with $B_1>0$ and $B_2\in\mathbb{R}$.
 For $\alpha \geq 0$, let
\begin{align*}
\mathcal{M}(\alpha, \varphi) & :=\left\{ f\in \mathcal{S} \,:\,
(1-\alpha)\frac{zf'(z)}{f(z)}+\alpha\left(
1+\frac{zf''(z)}{f'(z)}\right) \prec \varphi(z)  \right\},\\
\mathcal{L}(\alpha,  \varphi) &: =\left\{ f\in \mathcal{S} \,:\,
\left(\frac{zf'(z)}{f(z)}\right)^\alpha \left(
1+\frac{zf''(z)}{f'(z)}\right)^{1-\alpha}\prec \varphi(z) \right\},\\
\mathcal{P}(\alpha,  \varphi) &: =\left\{ f\in \mathcal{S} \,:\,
\frac{zf'(z)}{f(z)} + \alpha \frac{z^2f''(z)}{f(z)} \prec
\varphi(z)  \right\}.
\end{align*}\end{definition}

In this paper, we obtain the estimates for the second and third coefficients of functions $f$ when
\begin{enumerate}
\item  $f\in \mathcal{P}(\alpha,  \varphi)$ and $g:=f^{-1}\in \mathcal{P}(\beta,  \psi)$, or
 $g\in \mathcal{M}(\beta,  \psi)$, or $g\in \mathcal{L}(\beta,  \psi)$,
\item $f\in \mathcal{M}(\alpha,  \varphi)$ and $g\in \mathcal{M}(\beta,  \psi)$, or
 $g\in \mathcal{L}(\beta,  \psi)$,
\item $f\in \mathcal{L}(\alpha,  \varphi)$ and $g\in \mathcal{L}(\beta,  \psi)$.
\end{enumerate}

\section{Coefficient Estimates}
In the sequel, it is assumed that $\varphi$ and $\psi$ are analytic functions  of the form
\begin{equation}
  \label{varphi1} \varphi(z)= 1 + B_1 z + B_2 z^2 + B_3 z^3 + \cdots , \quad (B_1>0)
\end{equation}and
\begin{equation}
  \label{psi1} \psi(z)= 1 + D_1 z + D_2 z^2 + D_3 z^3 + \cdots , \quad (D_1>0).
\end{equation}

\begin{theorem}\label{the1} Let $f\in\sigma$ and $g=f^{-1}$. If
  $f\in \mathcal{P}(\alpha,  \varphi)$, $g\in \mathcal{P}(\beta,  \psi)$ and $f$ of the form
  \begin{equation}
  \label{eqf} f(z)= z + \sum_{n = 2}^{\infty} a_n z^n,
\end{equation}
 then
  \begin{equation}
    \label{res1} |a_2 | \leq \frac{B_1 D_1\sqrt{B_1(1+3\beta)+D_1 (1+3\alpha)}}{\sqrt{|\sigma B_1^2 D_1^2- (1+2\alpha)^2 (1+3\beta)(B_2 -B_1)D_1^2-(1+2\beta)^2(1+3\alpha)(D_2 -D_1)B_1^2|}}\end{equation}
    and \begin{equation}
    \label{res1i} 2\sigma|a_3 | \leq  B_1(3+10\beta)+ D_1(1+2\alpha)+ (3+10\beta)|B_2-B_1|+ \frac{(1+2\beta)^2B_1^2|D_2-D_1|}{D_1^2(1+2\alpha)}
  \end{equation} where $\sigma:=2+7\alpha+7\beta+24\alpha\beta$.
\end{theorem}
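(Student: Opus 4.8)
The plan is to extract coefficient relations from the two subordinations and then solve for $a_2$ and $a_3$. First I would write $f \in \mathcal{P}(\alpha,\varphi)$ as $zf'(z)/f(z) + \alpha z^2 f''(z)/f(z) = \varphi(w(z))$ for some Schwarz function $w(z) = c_1 z + c_2 z^2 + \cdots$ with $|c_1| \le 1$ and $|c_2| \le 1 - |c_1|^2$ (the standard bounds from the Schur--Carath\'eodory inequalities, equivalently from writing $w$ via a Carath\'eodory function). Expanding the left side as a power series using $f(z) = z + a_2 z^2 + a_3 z^3 + \cdots$, I would match the coefficients of $z$ and $z^2$ against $\varphi(w(z)) = 1 + B_1 c_1 z + (B_1 c_2 + B_2 c_1^2) z^2 + \cdots$. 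This yields two equations: one expressing $(1+2\alpha) a_2$ in terms of $B_1 c_1$, and a second expressing a combination of $a_2^2$ and $(1+3\alpha) a_3$ in terms of $B_1 c_2 + B_2 c_1^2$ and $B_1^2 c_1^2$.

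Next I would do the same for $g = f^{-1} \in \mathcal{P}(\beta,\psi)$. Since $g(w) = w - a_2 w^2 + (2a_2^2 - a_3) w^3 + \cdots$, I write $g(z) = z + b_2 z^2 + b_3 z^3 + \cdots$ with $b_2 = -a_2$ and $b_3 = 2a_2^2 - a_3$, and apply the identical expansion with $\beta$, $\psi$, and a second Schwarz function $v(z) = d_1 z + d_2 z^2 + \cdots$. This gives $(1+2\beta) b_2 = D_1 d_1$, i.e. $-(1+2\beta) a_2 = D_1 d_1$, together with the analogous second-order relation involving $b_2^2$, $b_3$, $D_1 d_2 + D_2 d_1^2$, and $D_1^2 d_1^2$. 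From the two first-order relations I get $c_1 = (1+2\alpha) a_2 / B_1$ and $d_1 = -(1+2\beta) a_2 / D_1$; from the two second-order relations, after eliminating $a_3$ (equivalently $b_3 = 2a_2^2 - a_3$), I obtain a single equation of the form $(\text{coefficient}) \, a_2^2 = B_1 c_2 + (B_2 - B_1) c_1^2 + (\text{similar in } d)$, where the $B_1 c_1^2$ and $B_2 c_1^2$ terms combine into a $(B_2 - B_1) c_1^2$ because of how the first-order relation substitutes in; the appearance of $B_2 - B_1$ and $D_2 - D_1$ in the statement is the signal that this merging happens. The coefficient on the left, after collecting, should be exactly $\sigma B_1^2 D_1^2$ divided by suitable factors, which is why $\sigma = 2 + 7\alpha + 7\beta + 24\alpha\beta$ appears.

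To get the bound \eqref{res1} on $|a_2|$, I solve this equation for $a_2^2$, take absolute values, and bound $|c_2| \le 1 - |c_1|^2$ and $|d_2| \le 1 - |d_1|^2$. Substituting $|c_1|^2 = (1+2\alpha)^2 |a_2|^2 / B_1^2$ and $|d_1|^2 = (1+2\beta)^2 |a_2|^2 / D_1^2$ produces an inequality of the form $|a_2|^2 \big( \text{stuff} \big) \le B_1 + D_1$-type terms, and solving this linear inequality in $|a_2|^2$ and simplifying gives precisely the right-hand side of \eqref{res1} after the square root. For \eqref{res1i}, I would instead return to the second-order relation for $f$ alone, solve for $a_3$ directly (writing $a_3 = a_2^2 + (\text{something involving } c_1, c_2)/(1+3\alpha)$), and similarly use the $g$-relation to get another expression for $a_3$; combining them to isolate $\sigma a_3$, taking moduli, using $|c_1|, |d_1| \le 1$ and $|c_2| \le 1$, $|d_2| \le 1$ (the cruder bound suffices here since we only want a clean estimate), and substituting the first-order relations to express $c_1, d_1$ in terms of $a_2$, whose modulus is in turn bounded, yields \eqref{res1i}.

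The main obstacle will be the bookkeeping in the elimination step: correctly expanding $zf'(z)/f(z) + \alpha z^2 f''(z)/f(z)$ to second order, then doing the same for $g$ with the inverse-function coefficients $b_2 = -a_2$, $b_3 = 2a_2^2 - a_3$, and combining the four resulting equations so that the coefficient of $a_2^2$ collapses to the stated $\sigma B_1^2 D_1^2$ form and the $B_2$, $D_2$ terms organize into $(B_2 - B_1)$ and $(D_2 - D_1)$. Everything downstream — taking absolute values, inserting the Schwarz bounds, and solving the resulting quadratic-in-$|a_2|^2$ inequality — is routine once that algebraic identity is in hand; care is only needed with the absolute-value signs, which is why the statement has $|{\cdots}|$ inside the square root in \eqref{res1}.
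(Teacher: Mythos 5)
Your overall strategy coincides with the paper's: expand both subordination conditions, match the coefficients of $z$ and $z^2$, use $g(w)=w-a_2w^2+(2a_2^2-a_3)w^3+\cdots$, eliminate $a_3$ (resp.\ $a_2^2$) between the two second-order relations, and then invoke coefficient bounds for the Schwarz functions; the elimination does produce $-(1+2\alpha)(1+3\beta)+(3+10\beta)(1+3\alpha)=\sigma$ exactly as you predict. The only structural difference is parametrization: the paper passes to Carath\'eodory functions $p_i=(1+u)/(1-u)$ with second coefficients bounded by $2$, while you keep the Schwarz coefficients with $|c_1|\le 1$, $|c_2|\le 1-|c_1|^2$.

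Two concrete points in your plan would, however, prevent you from landing on the stated constants. First, in your normalization the identity after eliminating $a_3$ is $\sigma a_2^2=(1+3\beta)(B_1c_2+B_2c_1^2)+(1+3\alpha)(D_1d_2+D_2d_1^2)$: there is no $B_1c_1^2$ term available to ``combine'' with $B_2c_1^2$, and substituting the first-order relation contributes $+B_1^2c_1^2/(1+2\alpha)$, not $-B_1c_1^2$. The quantities $B_2-B_1$ and $D_2-D_1$ arise either from the Carath\'eodory conversion, since $\tfrac12 B_1(c_2-c_1^2/2)+\tfrac14B_2c_1^2=\tfrac12B_1c_2+\tfrac14(B_2-B_1)c_1^2$, or, in your setup, only at the inequality step, via $|B_1c_2+B_2c_1^2|\le B_1(1-|c_1|^2)+|B_2||c_1|^2=B_1+(|B_2|-B_1)|c_1|^2$; so the $-B_1$ lives in the estimate, not in the algebraic identity, and you must carry the sharp Schwarz bound through. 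Second, for \eqref{res1i} you assert that the cruder bound $|c_2|\le 1$ suffices; it does not, since it yields $(3+10\beta)|B_2|$ where the theorem has $(3+10\beta)|B_2-B_1|$ (take $B_2=B_1$ to see these differ), so the refined bound (or the Carath\'eodory route) is needed there as well. Finally, your plan of moving the $|c_1|^2$-terms to the left and solving a linear inequality in $|a_2|^2$ only closes when the resulting coefficient of $|a_2|^2$ is positive; the paper avoids this sign issue by solving the exact identity for $a_2^2$ first and taking moduli afterwards, which is precisely what justifies the absolute value in the denominator of \eqref{res1}.
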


\begin{proof}
 Since $f\in \mathcal{P}(\alpha,  \varphi)$ and $g\in \mathcal{P}(\beta,  \psi)$, $g=f^{-1}$. There exist analytic functions
  $u, v : \mathbb{D} \rightarrow \mathbb{D}$, with $u(0)= v(0)= 0$,
  satisfying
  \begin{equation}
    \label{newth1} \frac{zf'(z)}{f(z)}+\frac{\alpha z^{2}f''(z)}{f(z)} = \varphi(u(z))\quad
  \text{and} \quad  \frac{wg'(w)}{g(w)} + \frac{\beta w^{2} g''(w)}{g(w)} = \psi(v(w)).
  \end{equation}
  Define the functions $p_1$ and $p_2$ by
  \begin{equation*}
    \label{eqpu} p_1(z): = \frac{1 + u(z)}{1 - u(z)}=1 + c_1 z + c_2 z^2 +
     \cdots \quad  \text{and} \quad
  p_2(z): = \frac{1 + v(z )}{1 - v(z)}= 1+ b_1 z + b_2 z^2 + \cdots, \end{equation*} or, equivalently,
  \begin{equation}
    \label{equ} u(z)= \frac{p_1(z)- 1}{p_1(z)+ 1}=\frac{1}{2} \left( c_1 z
     + \left(c_2 - \frac{c_1^2}{2} \right)z^2 + \cdots \right)\end{equation} and
     \begin{equation}\label{eqv} v(z)= \frac{p_2(z)- 1}{p_2(z)+ 1}=\frac{1}{2} \left( b_1
     z     + \left(b_2 - \frac{b_1^2}{2} \right)z^2 + \cdots \right) .\end{equation}
Then $p_1$ and $p_2$ are analytic in $\mathbb{D}$ with $p_1(0)=1=p_2(0)$. Since $u, v : \mathbb{D} \rightarrow \mathbb{D}$, the functions $p_1$
and $p_2$ have positive real part in $\mathbb{D}$, and $|b_i|\leq 2 $ and $|c_i|\leq 2$.
  In view of {\eqref{newth1}},  \eqref{equ} and \eqref{eqv}, clearly
  \begin{equation}
    \label{eq2} \frac{zf'(z)}{f(z)}+\frac{\alpha z^{2}f''(z)}{f(z)}= \varphi \left(\frac{p_1(z)- 1}{p_1(z)+ 1}
    \right)\quad  \text{and} \quad  \frac{wg'(w)}{g(w)} + \frac{\beta w^{2} g''(w)}{g(w)}= \psi \left(
    \frac{p_2(w)- 1}{p_2(w)+ 1} \right).
  \end{equation}
Using \eqref{equ} and \eqref{eqv} together with {\eqref{varphi1}} and \eqref{psi1},
it is evident that
  \begin{equation}
    \label{var1} \varphi \left(\frac{p_1(z)- 1}{p_1(z)+ 1} \right)=
    1 + \frac{1}{2} B_1 c_1 z + \left( \frac{1}{2} B_1 \left(c_2 -
    \frac{c_1^2}{2} \right)+ \frac{1}{4} B_2 c_1^2 \right) z^2 + \cdots
  \end{equation}
  and
  \begin{equation}
    \label{var2} \psi \left(\frac{p_2(w)- 1}{p_2(w)+ 1} \right)=
    1 + \frac{1}{2} D_1 b_1 w + \left( \frac{1}{2} D_1 \left(b_2 -
    \frac{b_1^2}{2} \right)+ \frac{1}{4} D_2 b_1^2 \right) w^2 + \cdots .
  \end{equation}
Since $f$ has the  Maclaurin series given by
\eqref{eqf}, a computation shows that its inverse $g=f^{- 1}$ has
the expansion
\[ g(w)=f^{- 1}(w)= w -a_2 w^2 +(2 a_2^2 - a_3)w^3 + \cdots . \]
  Since
  \[ \frac{zf'(z)}{f(z)}+\frac{\alpha z^{2}f''(z)}{f(z)} = 1 + a_2(1+2\alpha) z
  +(2(1+3\alpha) a_3 -(1+2\alpha) a_2^2)z^2 + \cdots \]
  and
  \[  \frac{wg'(w)}{g(w)} + \frac{\beta w^{2} g''(w)}{g(w)}
  = 1 -(1+2\beta) a_2 w +((3+10\beta) a_2^2 - 2(1+3\beta) a_3)w^2 + \cdots,
  \]
  it follows from {\eqref{eq2}}, {\eqref{var1}} and {\eqref{var2}} that
  \begin{equation}
    \label{eq1.1n} a_2(1+2\alpha) = \frac{1}{2} B_1 c_1,
  \end{equation}
  \begin{equation}
    \label{eq1.2n} 2(1+3\alpha) a_3 -(1+2\alpha) a_2^2 = \frac{1}{2} B_1 \left(c_2 - \frac{c_1^2}{2}
    \right)+ \frac{1}{4} B_2 c_1^2,
  \end{equation}
  \begin{equation}
    \label{eq1.3n} - (1+2\beta)a_2 = \frac{1}{2} D_1 b_1
  \end{equation}
  and
  \begin{equation}
    \label{eq1.4n} (3+10\beta) a_2^2 - 2(1+3\beta) a_3 = \frac{1}{2} D_1 \left(b_2 -
    \frac{b_1^2}{2} \right)+ \frac{1}{4} D_2 b_1^2 .
  \end{equation}
   It follows from {\eqref{eq1.1n}} and {\eqref{eq1.3n}} that
  \begin{equation}
    \label{eq1.5} b_1 = -\frac{B_1 (1+2\beta)}{D_1(1+2\alpha)} c_1.
  \end{equation}
   Equations {\eqref{eq1.1n}}, {\eqref{eq1.2n}}, {\eqref{eq1.4n}} and
 {\eqref{eq1.5}} lead to
  \[ a_2^2 = \frac{B_1^2 D_1^2[B_1(1+3\beta)c_2+D_1(1+3\alpha)b_2]}{2[\sigma B_1^2 D_1^2-(1+2\alpha)^2 (1+3\beta)(B_2 -B_1)D_1^2-(1+2\beta)^2(1+3\alpha)(D_2 -D_1)B_1^2]}, \]
  which, in view of the well-known inequalities $|b_2 | \leq 2$ and
  $|c_2 | \leq 2$ for functions
  with positive real part,  gives us the desired estimate on $|a_2 |$ as asserted in
  {\eqref{res1}}.

  By using  {\eqref{eq1.2n}}, {\eqref{eq1.4n}} and {\eqref{eq1.5}}  lead to
  \begin{align*}2\sigma a_3 &= \frac{1}{2}\left[B_1(3+10\beta)c_2+D_1(1+2\alpha)b_2\right] \\&\quad+\frac{c_1^2}{4}\left[(3+10\beta)(B_2-B_1) +\frac{(1+2\beta)^2 B_1^2(D_2-D_1)}{D_1^2(1+2\alpha)}\right], \end{align*}
  and this yields the estimate given in {\eqref{res1i}}.
\end{proof}

\begin{remark} When $\alpha=\beta=0$ and $B_1=D_1=2$, then \eqref{res1} reduces to Theorem \ref{ked1.1}.
When $\beta=\alpha$ and $\psi=\varphi$, Theorem \ref{the1} reduces to \cite[Theorem 2.2]{rmabi}.
\end{remark}

\begin{theorem}\label{the2}
  Let $f\in\sigma$ and $g=f^{-1}$. If
  $f\in \mathcal{P}(\alpha,  \varphi)$ and $g\in \mathcal{M}(\beta,  \psi)$,
  then
  \begin{equation}
    \label{res2} |a_2 | \leq \frac{B_1 D_1\sqrt{B_1(1+2\beta)+D_1 (1+3\alpha)}}{\sqrt{|\sigma B_1^2 D_1^2- (1+2\alpha)^2 (1+2\beta)(B_2 -B_1)D_1^2-(1+\beta)^2(1+3\alpha)(D_2 -D_1)B_1^2|}}\end{equation}
    and \begin{equation}
    \label{res2i} {2\sigma}|a_3 | \leq  B_1(3+5\beta)+D_1(1+2\alpha)+(3+5\beta)|B_2-B_1|+\frac{(1+\beta)^2 B_1^2|D_2-D_1|}{D_1^2(1+2\alpha)}
  \end{equation} where $\sigma:=2+7\alpha+3\beta+11\alpha\beta$.
\end{theorem}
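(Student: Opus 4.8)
The plan is to run the argument of Theorem~\ref{the1} with only one genuinely new ingredient, namely the Maclaurin expansion of the differential operator defining $\mathcal{M}(\beta,\psi)$ evaluated at $g=f^{-1}$. Since $f\in\mathcal{P}(\alpha,\varphi)$ and $g\in\mathcal{M}(\beta,\psi)$, I would first choose Schwarz functions $u,v:\mathbb{D}\to\mathbb{D}$ with $u(0)=v(0)=0$ so that
\[
\frac{zf'(z)}{f(z)}+\frac{\alpha z^{2}f''(z)}{f(z)}=\varphi(u(z)),\qquad
(1-\beta)\frac{wg'(w)}{g(w)}+\beta\Bigl(1+\frac{wg''(w)}{g'(w)}\Bigr)=\psi(v(w)),
\]
then pass to $p_1=(1+u)/(1-u)=1+c_1z+c_2z^2+\cdots$ and $p_2=(1+v)/(1-v)=1+b_1w+b_2w^2+\cdots$, which have positive real part, so $|c_i|\le2$ and $|b_i|\le2$, and reuse the expansions \eqref{var1} and \eqref{var2} of $\varphi\bigl((p_1-1)/(p_1+1)\bigr)$ and $\psi\bigl((p_2-1)/(p_2+1)\bigr)$ without change.

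The one new computation is the left-hand side of the second subordination. Inserting $g(w)=w-a_2w^2+(2a_2^2-a_3)w^3+\cdots$ and expanding $wg'/g$ and $1+wg''/g'$ up to $w^2$ gives
\[
(1-\beta)\frac{wg'(w)}{g(w)}+\beta\Bigl(1+\frac{wg''(w)}{g'(w)}\Bigr)
=1-(1+\beta)a_2 w+\bigl[(3+5\beta)a_2^{2}-(2+4\beta)a_3\bigr]w^{2}+\cdots ,
\]
whereas the $f$-side, $1+(1+2\alpha)a_2 z+\bigl(2(1+3\alpha)a_3-(1+2\alpha)a_2^{2}\bigr)z^{2}+\cdots$, is exactly as in the proof of Theorem~\ref{the1}. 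Matching coefficients in the two subordinations produces the analogues of \eqref{eq1.1n}--\eqref{eq1.4n}; in particular $(1+2\alpha)a_2=\tfrac12 B_1c_1$ and $-(1+\beta)a_2=\tfrac12 D_1b_1$, so $b_1=-\dfrac{B_1(1+\beta)}{D_1(1+2\alpha)}\,c_1$.

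From here the rest is elimination together with the standard Carath\'eodory bounds. To obtain \eqref{res2} I would remove $a_3$ by forming the combination of the two second-order relations with weights $(1+2\beta)$ and $(1+3\alpha)$; the $a_3$-terms cancel and the coefficient of $a_2^{2}$ that remains is $(3+5\beta)(1+3\alpha)-(1+2\alpha)(1+2\beta)=2+7\alpha+3\beta+11\alpha\beta=\sigma$. Substituting $c_1^{2}=4(1+2\alpha)^{2}a_2^{2}/B_1^{2}$ and $b_1^{2}=B_1^{2}(1+\beta)^{2}c_1^{2}/(D_1^{2}(1+2\alpha)^{2})$ then yields
\[
a_2^{2}=\frac{B_1^{2}D_1^{2}\bigl[(1+2\beta)B_1c_2+(1+3\alpha)D_1b_2\bigr]}{2\bigl[\sigma B_1^{2}D_1^{2}-(1+2\alpha)^{2}(1+2\beta)(B_2-B_1)D_1^{2}-(1+3\alpha)(1+\beta)^{2}(D_2-D_1)B_1^{2}\bigr]},
\]
and $|c_2|,|b_2|\le2$ give \eqref{res2}. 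For \eqref{res2i} I would instead remove $a_2^{2}$ by combining the same two relations with weights $(3+5\beta)$ and $(1+2\alpha)$; this leaves $2\sigma a_3$ expressed through $c_2$, $b_2$ and $c_1^{2}$, and after replacing $b_1^{2}$ by its value in $c_1^{2}$ the bounds $|c_2|,|b_2|\le2$, $|c_1|^{2}\le4$ together with the triangle inequality (legitimate since $3+5\beta>0$ and the coefficients on the right are nonnegative when $\alpha,\beta\ge0$ and $B_1,D_1>0$) give \eqref{res2i}.

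There is no real obstacle here; the one step that must be executed carefully is the $w^{2}$-expansion of the $\mathcal{M}(\beta,\psi)$-operator at $f^{-1}$, since the constants $1+\beta$, $3+5\beta$, $2+4\beta$ and hence $\sigma$ all propagate from it. It is also worth noting that the absolute value surrounding the bracket in the denominator of \eqref{res2} is genuinely needed, because $B_2-B_1$ and $D_2-D_1$ need not be of one sign; the inequality $|a_2|^{2}\le(\cdots)/|\cdots|$ still holds because the numerator estimate is sign-free.
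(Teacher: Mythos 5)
Your proposal is correct and follows essentially the same route as the paper: the same subordination setup with Schwarz functions and Carath\'eodory functions $p_1,p_2$, the same expansions (the $w^2$-coefficient $(3+5\beta)a_2^2-2(1+2\beta)a_3$ of the $\mathcal{M}(\beta,\psi)$-operator at $f^{-1}$ checks out), and the same eliminations with weights $(1+2\beta),(1+3\alpha)$ for $a_2^2$ and $(3+5\beta),(1+2\alpha)$ for $a_3$, yielding exactly \eqref{res2} and \eqref{res2i}. Indeed your displayed formula for $a_2^2$, with the factor $(1+\beta)^2$, matches the theorem statement \eqref{res2}, whereas the paper's intermediate display carries a typographical $(1+2\beta)^2$ there.
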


\begin{proof}
Let $f\in \mathcal{P}(\alpha,  \varphi)$ and $g\in \mathcal{M}(\beta,  \psi)$, $g=f^{-1}$. There exist analytic functions
  $u, v : \mathbb{D} \rightarrow \mathbb{D}$, with $u(0)= v(0)= 0$,
  such that
  \begin{equation}
    \label{newth2} \frac{zf'(z)}{f(z)}+\frac{\alpha z^{2}f''(z)}{f(z)} = \varphi(u(z))\quad
  \text{and} \quad  (1-\beta) \frac{wg'(w)}{g(w)} + \beta\left(1+\frac{ w g''(w)}{g'(w)} \right)= \psi(v(w)),
  \end{equation}
    Since
  \[ \frac{zf'(z)}{f(z)}+\frac{\alpha z^{2}f''(z)}{f(z)} = 1 + a_2(1+2\alpha) z
  +(2(1+3\alpha) a_3 -(1+2\alpha) a_2^2)z^2 + \cdots \]
  and
  \[(1-\beta) \frac{wg'(w)}{g(w)} + \beta\left(1+\frac{ w g''(w)}{g'(w)}\right)
   = 1 - (1+\beta) a_2 w +((3+5\beta) a_2^2 - 2(1+2\beta) a_3)w^2 +
     \cdots, \]
     then {\eqref{var1}}, {\eqref{var2}} and {\eqref{newth2}} yield
  \begin{equation}
    \label{eq2.1n} a_2(1+2\alpha) = \frac{1}{2} B_1 c_1,
  \end{equation}
  \begin{equation}
    \label{eq2.2n} 2(1+3\alpha) a_3 -(1+2\alpha) a_2^2 = \frac{1}{2} B_1 \left(c_2 - \frac{c_1^2}{2}
    \right)+ \frac{1}{4} B_2 c_1^2,
  \end{equation}
  \begin{equation}
    \label{eq2.3n} - (1+\beta)a_2 = \frac{1}{2} D_1 b_1
  \end{equation}
  and
  \begin{equation}
    \label{eq2.4n} (3+5\beta) a_2^2 - 2(1+2\beta) a_3 = \frac{1}{2} D_1 \left(b_2 -
    \frac{b_1^2}{2} \right)+ \frac{1}{4} D_2 b_1^2 .
  \end{equation}
 It follows from {\eqref{eq2.1n}} and {\eqref{eq2.3n}} that
  \begin{equation}
    \label{eq2.5} b_1 = -\frac{B_1 (1+\beta)}{D_1(1+2\alpha)} c_1.
  \end{equation}
   Equations {\eqref{eq2.1n}}, {\eqref{eq2.2n}}, {\eqref{eq2.4n}} and
 {\eqref{eq2.5}} lead to
  \[ a_2^2 = \frac{B_1^2 D_1^2[B_1(1+2\beta)c_2+D_1(1+3\alpha)b_2]}{2[\sigma B_1^2 D_1^2-(1+2\alpha)^2 (1+2\beta)(B_2 -B_1)D_1^2-(1+2\beta)^2(1+3\alpha)(D_2 -D_1)B_1^2]}, \]
  which  gives us the desired estimate on $|a_2 |$ as asserted in
  {\eqref{res2}} when  $|b_2 | \leq 2$ and
  $|c_2 | \leq 2$.

 Since  {\eqref{eq2.2n}}, {\eqref{eq2.4n}} and {\eqref{eq2.5}}  lead to
  \begin{align*}2\sigma a_3 &= \frac{1}{2}[B_1(3+5\beta)c_2+D_1(1+2\alpha)b_2] \\&\quad+\frac{c_1^2}{4} \left[(3+5\beta)(B_2-B_1) +\frac{(1+\beta)^2 B_1^2(D_2-D_1)}{D_1^2(1+2\alpha)}\right], \end{align*}
  and this yields the estimate given in {\eqref{res2i}}.
\end{proof}

\begin{theorem}\label{the3}
   Let $f\in\sigma$ and $g=f^{-1}$. If
  $f\in \mathcal{P}(\alpha,  \varphi)$ and $g\in \mathcal{L}(\beta,  \psi)$,
  then
  \begin{equation}
    \label{res3} |a_2 | \leq \frac{B_1 D_1\sqrt{2[B_1(3-2\beta)+D_1 (1+3\alpha)]}}{\sqrt{|\sigma B_1^2 D_1^2- 2(1+2\alpha)^2 (3-2\beta)(B_2 -B_1)D_1^2-2(2-\beta)^2(1+3\alpha)(D_2 -D_1)B_1^2|}}\end{equation}
    and \begin{align}
    \label{res3i}|\sigma a_3 | &\leq  \frac{1}{2}B_1(\beta^2-11\beta+16) +D_1(1+2\alpha)+\frac{1}{2}(\beta^2-11\beta+16)|B_2-B_1|\notag\\&\quad+\frac{(2-\beta)^2 B_1^2|D_2-D_1|}{D_1^2(1+2\alpha)}
  \end{align} where $\sigma:=10+36\alpha-7\beta-25\alpha\beta+\beta^2+3\alpha\beta^2$.
\end{theorem}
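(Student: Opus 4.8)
The plan is to mimic the proofs of Theorems~\ref{the1} and~\ref{the2}; the only genuinely new step is the second-order expansion of the functional defining $\mathcal{L}(\beta,\psi)$ evaluated at $g=f^{-1}$. Since $f\in\mathcal{P}(\alpha,\varphi)$ and $g\in\mathcal{L}(\beta,\psi)$, there exist $u,v:\mathbb{D}\to\mathbb{D}$ with $u(0)=v(0)=0$ such that
\[
\frac{zf'(z)}{f(z)}+\frac{\alpha z^{2}f''(z)}{f(z)}=\varphi(u(z)),
\]
\[
\left(\frac{wg'(w)}{g(w)}\right)^{\beta}\left(1+\frac{wg''(w)}{g'(w)}\right)^{1-\beta}=\psi(v(w)).
\]
Introducing the Carath\'{e}odory functions $p_1,p_2$ exactly as in \eqref{equ}--\eqref{eqv}, the two right-hand sides admit the expansions \eqref{var1} and \eqref{var2}, and the first left-hand side is the same series $1+(1+2\alpha)a_2 z+\bigl(2(1+3\alpha)a_3-(1+2\alpha)a_2^2\bigr)z^2+\cdots$ already used in Theorem~\ref{the1}.

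The key computation is the expansion of $\Phi(w):=\bigl(wg'/g\bigr)^{\beta}\bigl(1+wg''/g'\bigr)^{1-\beta}$ through order $w^2$. Starting from $g(w)=w-a_2w^2+(2a_2^2-a_3)w^3+\cdots$, I would first get $wg'(w)/g(w)=1-a_2w+(3a_2^2-2a_3)w^2+\cdots$ and $1+wg''(w)/g'(w)=1-2a_2w+(8a_2^2-6a_3)w^2+\cdots$, then apply the binomial series $(1+X)^{\beta}=1+\beta X+\tfrac{\beta(\beta-1)}{2}X^2+\cdots$ and $(1+Y)^{1-\beta}=1+(1-\beta)Y-\tfrac{\beta(1-\beta)}{2}Y^2+\cdots$ to the two factors and multiply. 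Collecting terms gives
\[
\Phi(w)=1-(2-\beta)a_2\,w+\left[\frac{\beta^2-11\beta+16}{2}\,a_2^2-(6-4\beta)\,a_3\right]w^2+\cdots .
\]
Equating coefficients in $\Phi(w)=\psi(v(w))$ and in the first identity yields
\[
(1+2\alpha)a_2=\tfrac12 B_1 c_1,\qquad 2(1+3\alpha)a_3-(1+2\alpha)a_2^2=\tfrac12 B_1\!\left(c_2-\tfrac{c_1^2}{2}\right)+\tfrac14 B_2 c_1^2,
\]
\[
-(2-\beta)a_2=\tfrac12 D_1 b_1,\qquad \frac{\beta^2-11\beta+16}{2}\,a_2^2-(6-4\beta)a_3=\tfrac12 D_1\!\left(b_2-\tfrac{b_1^2}{2}\right)+\tfrac14 D_2 b_1^2 .
\]

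From the first and third relations, $b_1=-\dfrac{(2-\beta)B_1}{D_1(1+2\alpha)}\,c_1$, whence $c_1^2=4(1+2\alpha)^2a_2^2/B_1^2$ and $b_1^2=4(2-\beta)^2a_2^2/D_1^2$. To reach \eqref{res3} I would eliminate $a_3$ by adding $(6-4\beta)$ times the second relation to $2(1+3\alpha)$ times the fourth; the coefficient of $a_2^2$ collapses to $(1+3\alpha)(\beta^2-11\beta+16)-(1+2\alpha)(6-4\beta)=\sigma$, and after substituting the above expressions for $c_1^2,b_1^2$ and clearing denominators one obtains $a_2^2$ as a quotient with numerator $B_1^2D_1^2\bigl[(3-2\beta)B_1c_2+(1+3\alpha)D_1b_2\bigr]$ and denominator the bracket occurring under the radical in \eqref{res3} (here one uses $6-4\beta=2(3-2\beta)$); the bounds $|b_2|\le2$, $|c_2|\le2$ then give \eqref{res3}. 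For \eqref{res3i} I would instead eliminate $a_2^2$ by adding $\tfrac{\beta^2-11\beta+16}{2}$ times the second relation to $(1+2\alpha)$ times the fourth; the coefficient of $a_3$ is again $\sigma$, and substituting $b_1^2$ in terms of $c_1^2$ gives
\[
\sigma a_3=\frac{\beta^2-11\beta+16}{4}B_1 c_2+\frac{1+2\alpha}{2}D_1 b_2+\frac{c_1^2}{4}\left[\frac{\beta^2-11\beta+16}{2}(B_2-B_1)+\frac{(2-\beta)^2B_1^2(D_2-D_1)}{D_1^2(1+2\alpha)}\right],
\]
and applying $|c_2|\le2$, $|b_2|\le2$, $|c_1|\le2$ yields \eqref{res3i}.

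The only real obstacle is bookkeeping: carrying the fractional exponents $\beta$ and $1-\beta$ through the two binomial expansions and the product without error, so as to land correctly on the coefficients $2-\beta$, $\tfrac{\beta^2-11\beta+16}{2}$ and $6-4\beta$; after that the argument is the same linear elimination already carried out in Theorems~\ref{the1} and~\ref{the2}. One caveat worth noting: writing \eqref{res3i} with $\beta^2-11\beta+16$ rather than $|\beta^2-11\beta+16|$ tacitly uses $\beta^2-11\beta+16>0$, which holds on the natural range $0\le\beta\le1$ (there $\beta^2-11\beta+16\ge6$); outside that range the coefficient of $|B_2-B_1|$ should be taken in absolute value.
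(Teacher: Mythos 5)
Your proposal is correct and follows essentially the same route as the paper: the same four coefficient identities (your $\tfrac{\beta^2-11\beta+16}{2}$ is the paper's $8(1-\beta)+\tfrac12\beta(\beta+5)$, and $6-4\beta=2(3-2\beta)$), the same linear eliminations, and the same final expressions for $a_2^2$ and $\sigma a_3$. Your closing caveat about the sign of $\beta^2-11\beta+16$ is a fair observation but does not affect the argument for $0\le\beta\le1$.
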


\begin{proof}
Let $f\in \mathcal{P}(\alpha,  \varphi)$ and $g\in \mathcal{L}(\beta,  \psi)$, $g=f^{-1}$. Then there are analytic functions
  $u, v : \mathbb{D} \rightarrow \mathbb{D}$, with $u(0)= v(0)= 0$,
  satisfying
  \begin{equation}
    \label{newth3} \frac{zf'(z)}{f(z)}+\frac{\alpha z^{2}f''(z)}{f(z)} = \varphi(u(z))\quad
  \text{and} \quad  \left(\frac{wg'(w)}{g(w)}\right)^\beta \left(1+\frac{ wg''(w)}{g'(w)}\right)^{1-\beta}= \psi(v(w)),
  \end{equation}
    Since
  \[ \frac{zf'(z)}{f(z)}+\frac{\alpha z^{2}f''(z)}{f(z)} = 1 + a_2(1+2\alpha) z
  +(2(1+3\alpha) a_3 -(1+2\alpha) a_2^2)z^2 + \cdots \]
  and
 \begin{align*}&\left(\frac{wg'(w)}{g(w)}\right)^\beta \left(1+\frac{ w
   g''(w)}{g'(w)}\right)^{1-\beta}\\& = 1 - (2-\beta) a_2 w +\Big((8(1-\beta)+\frac{1}{2}\beta(\beta+5))a_2^2 - 2(3-2\beta) a_3\Big)w^2 +
     \cdots, \end{align*}
     then {\eqref{var1}}, {\eqref{var2}} and {\eqref{newth3}} yield
  \begin{equation}
    \label{eq3.1n} a_2(1+2\alpha) = \frac{1}{2} B_1 c_1,
  \end{equation}
  \begin{equation}
    \label{eq3.2n} 2(1+3\alpha) a_3 -(1+2\alpha) a_2^2 = \frac{1}{2} B_1 \left(c_2 - \frac{c_1^2}{2}
    \right)+ \frac{1}{4} B_2 c_1^2,
  \end{equation}
  \begin{equation}
    \label{eq3.3n} - (2-\beta)a_2 = \frac{1}{2} D_1 b_1
  \end{equation}
  and
  \begin{equation}
    \label{eq3.4n} [8(1-\beta)+\frac{\beta}{2}(\beta+5)] a_2^2 - 2(3-2\beta) a_3 = \frac{1}{2} D_1 \left(b_2 -
    \frac{b_1^2}{2} \right)+ \frac{1}{4} D_2 b_1^2 .
  \end{equation}
 It follows from {\eqref{eq3.1n}} and {\eqref{eq3.3n}} that
  \begin{equation}
    \label{eq3.5} b_1 = -\frac{B_1 (2-\beta)}{D_1(1+2\alpha)} c_1.
  \end{equation}
   Equations {\eqref{eq3.1n}}, {\eqref{eq3.2n}}, {\eqref{eq3.4n}} and
 {\eqref{eq3.5}} lead to
  \[ a_2^2 = \frac{B_1^2 D_1^2[B_1(3-2\beta)c_2+D_1(1+3\alpha)b_2]}{\sigma B_1^2 D_1^2- 2(1+2\alpha)^2 (3-2\beta)(B_2 -B_1)D_1^2-2(2-\beta)^2(1+3\alpha)(D_2 -D_1)B_1^2}, \]
  which, in view of the well-known inequalities $|b_2 | \leq 2$ and
  $|c_2 | \leq 2$ for functions
  with positive real part,  gives us the desired estimate on $|a_2 |$ as asserted in
  {\eqref{res3}}.

  By using  {\eqref{eq3.2n}}, {\eqref{eq3.4n}} and {\eqref{eq3.5}}  lead to
  \begin{align*} {\sigma}a_3 &= \frac{B_1}{4}(\beta^2-11\beta+16)c_2+\frac{D_1}{2}(1+2\alpha)b_2 \\&\quad+\frac{c_1^2}{4} \left[\frac{1}{2}(\beta^2-11\beta+16)(B_2-B_1)+ \frac{(2-\beta)^2 B_1^2(D_2-D_1)}{D_1^2(1+2\alpha)}\right] \end{align*}
  and this yields the estimate given in {\eqref{res3i}}.
\end{proof}

\begin{theorem}\label{the4}
   Let $f\in\sigma$ and $g=f^{-1}$. If
  $f\in \mathcal{M}(\alpha,  \varphi)$, $g\in \mathcal{M}(\beta,  \psi)$,
  then
  \begin{equation}
    \label{res4} |a_2 | \leq \frac{B_1 D_1\sqrt{B_1(1+2\beta)+D_1 (1+2\alpha)}}{\sqrt{|\sigma B_1^2 D_1^2- (1+\alpha)^2 (1+2\beta)(B_2 -B_1)D_1^2-(1+\beta)^2(1+2\alpha)(D_2 -D_1)B_1^2|}}\end{equation}
    and \begin{equation}
    \label{res4i} {2\sigma}|a_3 | \leq  B_1(3+5\beta) +D_1(1+3\alpha)+(3+5\beta)|B_2-B_1| +\frac{(1+\beta)^2 (1+3\alpha) B_1^2|D_2-D_1|}{D_1^2(1+\alpha)^2}
  \end{equation} where $\sigma:=2+3\alpha+3\beta+4\alpha\beta$.
\end{theorem}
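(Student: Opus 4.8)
The plan is to follow exactly the scheme used for Theorems \ref{the1}--\ref{the3}. Since $f\in\mathcal{M}(\alpha,\varphi)$ and $g=f^{-1}\in\mathcal{M}(\beta,\psi)$, there are analytic functions $u,v:\mathbb{D}\to\mathbb{D}$ with $u(0)=v(0)=0$ such that
\[
(1-\alpha)\frac{zf'(z)}{f(z)}+\alpha\left(1+\frac{zf''(z)}{f'(z)}\right)=\varphi(u(z)),\qquad
(1-\beta)\frac{wg'(w)}{g(w)}+\beta\left(1+\frac{wg''(w)}{g'(w)}\right)=\psi(v(w)).
\]
As in \eqref{equ}--\eqref{eqv} I would introduce the Carath\'eodory functions $p_1,p_2$ via $u=(p_1-1)/(p_1+1)$ and $v=(p_2-1)/(p_2+1)$, so that the two right-hand sides have the expansions \eqref{var1} and \eqref{var2}, and I would keep in mind that $|c_i|\le 2$ and $|b_i|\le 2$.

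The next step is to expand the left-hand sides. With $f(z)=z+a_2z^2+a_3z^3+\cdots$ a short computation gives
\[
(1-\alpha)\frac{zf'(z)}{f(z)}+\alpha\left(1+\frac{zf''(z)}{f'(z)}\right)=1+(1+\alpha)a_2 z+\big((2+4\alpha)a_3-(1+3\alpha)a_2^2\big)z^2+\cdots,
\]
and, using the inverse expansion $g(w)=w-a_2w^2+(2a_2^2-a_3)w^3+\cdots$, the corresponding expression for $g$ equals $1-(1+\beta)a_2 w+\big((3+5\beta)a_2^2-(2+4\beta)a_3\big)w^2+\cdots$. Matching the coefficients of $z,z^2$ and of $w,w^2$ against \eqref{var1} and \eqref{var2} produces the four relations
\[
(1+\alpha)a_2=\frac{1}{2}B_1 c_1,\qquad (2+4\alpha)a_3-(1+3\alpha)a_2^2=\frac{1}{2}B_1\left(c_2-\frac{c_1^2}{2}\right)+\frac{1}{4}B_2 c_1^2,
\]
\[
-(1+\beta)a_2=\frac{1}{2}D_1 b_1,\qquad (3+5\beta)a_2^2-(2+4\beta)a_3=\frac{1}{2}D_1\left(b_2-\frac{b_1^2}{2}\right)+\frac{1}{4}D_2 b_1^2.
\]
The first and third relations give $b_1=-B_1(1+\beta)c_1/(D_1(1+\alpha))$.

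To obtain \eqref{res4} I would eliminate $a_3$ by forming $(1+2\beta)$ times the second relation plus $(1+2\alpha)$ times the fourth: the $a_3$ terms cancel and the coefficient of $a_2^2$ collapses to $\sigma=2+3\alpha+3\beta+4\alpha\beta$. Substituting $c_1^2=4(1+\alpha)^2a_2^2/B_1^2$ and $b_1^2=4(1+\beta)^2a_2^2/D_1^2$ and moving the emerging $a_2^2$ terms to the left yields
\[
a_2^2=\frac{B_1^2D_1^2\big[(1+2\beta)B_1c_2+(1+2\alpha)D_1b_2\big]}{2\big[\sigma B_1^2D_1^2-(1+\alpha)^2(1+2\beta)(B_2-B_1)D_1^2-(1+\beta)^2(1+2\alpha)(D_2-D_1)B_1^2\big]},
\]
and then $|c_2|\le 2$, $|b_2|\le 2$ give \eqref{res4}. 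To obtain \eqref{res4i} I would instead eliminate $a_2^2$ by forming $(3+5\beta)$ times the second relation plus $(1+3\alpha)$ times the fourth; now the $a_3$ coefficient becomes $2\sigma$, and after replacing $b_1^2$ by $B_1^2(1+\beta)^2c_1^2/(D_1^2(1+\alpha)^2)$ one arrives at
\[
2\sigma a_3=\frac{1}{2}\big[B_1(3+5\beta)c_2+D_1(1+3\alpha)b_2\big]+\frac{c_1^2}{4}\left[(3+5\beta)(B_2-B_1)+\frac{(1+\beta)^2(1+3\alpha)B_1^2(D_2-D_1)}{D_1^2(1+\alpha)^2}\right].
\]
Applying $|c_2|\le2$, $|b_2|\le2$ and $|c_1|^2\le4$ then gives \eqref{res4i}.

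There is no real conceptual obstacle here: the argument is the same linear-elimination-plus-Carath\'eodory-bound scheme as in the previous theorems. The only point that requires care is the algebraic bookkeeping, namely checking that the combination $(1+2\beta)\cdot(\text{second})+(1+2\alpha)\cdot(\text{fourth})$ annihilates $a_3$ while $(3+5\beta)\cdot(\text{second})+(1+3\alpha)\cdot(\text{fourth})$ annihilates $a_2^2$, and that the two surviving coefficients are exactly $\sigma$ and $2\sigma$; this is precisely what pins down $\sigma=2+3\alpha+3\beta+4\alpha\beta$. Finally, as in Theorems \ref{the1}--\ref{the3}, the bound \eqref{res4} is to be read under the tacit assumption that the quantity inside the absolute value in its denominator is nonzero.
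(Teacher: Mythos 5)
Your proposal is correct and follows essentially the same argument as the paper: the same subordination setup with Carath\'eodory functions $p_1,p_2$, the same four coefficient relations, and the same linear eliminations (annihilating $a_3$ to get \eqref{res4} and annihilating $a_2^2$ to get \eqref{res4i}), with all coefficients, including $\sigma=2+3\alpha+3\beta+4\alpha\beta$ and the final displays for $a_2^2$ and $2\sigma a_3$, matching the paper's.
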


\begin{proof}
Let $f\in \mathcal{M}(\alpha,  \varphi)$ and $g\in \mathcal{M}(\beta,  \psi)$, $g=f^{-1}$. Then there are analytic functions
  $u, v : \mathbb{D} \rightarrow \mathbb{D}$, with $u(0)= v(0)= 0$,
  satisfying
  \begin{equation}
    \label{newth4} (1-\alpha)\frac{zf'(z)}{f(z)}+\alpha\left(1+\frac{ zf''(z)}{f'(z)}\right) = \varphi(u(z)),
   \quad  (1-\beta) \frac{wg'(w)}{g(w)} + \beta\left(1+\frac{ w g''(w)}{g'(w)}\right)= \psi(v(w)),
  \end{equation}
    Since
  \[ (1-\alpha)\frac{zf'(z)}{f(z)}+\alpha\left(1+\frac{ zf''(z)}{f'(z)}\right)
  = 1 + (1+\alpha) a_2 z +(2(1+2\alpha) a_3 - (1+3\alpha)a_2^2)z^2 +
     \cdots \]
  and
  \[(1-\beta) \frac{wg'(w)}{g(w)} + \beta\left(1+\frac{ w g''(w)}{g'(w)}\right)
   = 1 - (1+\beta) a_2 w +((3+5\beta) a_2^2 - 2(1+2\beta) a_3)w^2 +
     \cdots, \]
     then {\eqref{var1}}, {\eqref{var2}} and {\eqref{newth4}} yield
  \begin{equation}
    \label{eq4.1n} a_2(1+\alpha) = \frac{1}{2} B_1 c_1,
  \end{equation}
  \begin{equation}
    \label{eq4.2n} 2(1+2\alpha) a_3 -(1+3\alpha) a_2^2 = \frac{1}{2} B_1 \left(c_2 - \frac{c_1^2}{2}
    \right)+ \frac{1}{4} B_2 c_1^2,
  \end{equation}
  \begin{equation}
    \label{eq4.3n} - (1+\beta)a_2 = \frac{1}{2} D_1 b_1
  \end{equation}
  and
  \begin{equation}
    \label{eq4.4n} (3+5\beta) a_2^2 - 2(1+2\beta) a_3 = \frac{1}{2} D_1 \left(b_2 -
    \frac{b_1^2}{2} \right)+ \frac{1}{4} D_2 b_1^2 .
  \end{equation}
 It follows from {\eqref{eq4.1n}} and {\eqref{eq4.3n}} that
  \begin{equation}
    \label{eq4.5} b_1 = -\frac{B_1 (1+\beta)}{D_1(1+\alpha)} c_1.
  \end{equation}
   Equations {\eqref{eq4.1n}}, {\eqref{eq4.2n}}, {\eqref{eq4.4n}} and
 {\eqref{eq4.5}} lead to
  \[ a_2^2 = \frac{B_1^2 D_1^2[B_1(1+2\beta)c_2+D_1(1+2\alpha)b_2]}{2\sigma B_1^2 D_1^2- 2(1+\alpha)^2 (1+2\beta)(B_2 -B_1)D_1^2-2(1+\beta)^2(1+2\alpha)(D_2 -D_1)B_1^2}, \]
  which, in view of the well-known inequalities $|b_2 | \leq 2$ and
  $|c_2 | \leq 2$ for functions
  with positive real part,  gives us the desired estimate on $|a_2 |$ as asserted in
  {\eqref{res4}}.

  By using  {\eqref{eq4.2n}}, {\eqref{eq4.4n}} and {\eqref{eq4.5}}  lead to
   \begin{align*}{2\sigma} a_3 &= \frac{B_1}{2}(3+5\beta)c_2+\frac{D_1}{2}(1+3\alpha)b_2 +\frac{c_1^2}{4} \Big[(3+5\beta)(B_2-B_1)\\&\quad + \frac{(1+\beta)^2 (1+3\alpha) B_1^2(D_2-D_1)}{D_1^2(1+\alpha)^2}\Big]\end{align*}
  and this yields the estimate given in {\eqref{res4i}}.
\end{proof}

\begin{remark}
When $\beta=\alpha$ and $\psi=\varphi$, Theorem \ref{the4} reduces to \cite[Theorem 2.3]{rmabi}.
\end{remark}

\begin{theorem}\label{the5}
  Let $f\in\sigma$ and $g=f^{-1}$. If
  $f\in \mathcal{M}(\alpha,  \varphi)$, $g\in \mathcal{L}(\beta,  \psi)$,
  then
  \begin{equation}
    \label{res5} |a_2 | \leq \frac{B_1 D_1\sqrt{2[B_1(3-2\beta)+D_1 (1+2\alpha)]}}{\sqrt{|\sigma B_1^2 D_1^2- 2(1+\alpha)^2 (3-2\beta)(B_2 -B_1)D_1^2-2(2-\beta)^2(1+2\alpha)(D_2 -D_1)B_1^2|}}\end{equation}
    and \begin{align}
    \label{res5i} |\sigma a_3 | &\leq \frac{B_1}{2}(\beta^2-11\beta+16) +D_1(1+3\alpha)+\frac{1}{2}(\beta^2-11\beta+16)|B_2-B_1|\notag\\&\quad +\frac{(2-\beta)^2 (1+3\alpha) B_1^2|D_2-D_1|}{D_1^2(1+\alpha)^2}
  \end{align} where $\sigma:=10+14\alpha-7\beta+\beta^2+2\alpha\beta^2-10\alpha\beta$.
\end{theorem}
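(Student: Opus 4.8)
The plan is to follow verbatim the scheme of the proofs of Theorems \ref{the1}--\ref{the4}. Since $f\in\mathcal{M}(\alpha,\varphi)$ and $g=f^{-1}\in\mathcal{L}(\beta,\psi)$, there are analytic functions $u,v:\mathbb{D}\to\mathbb{D}$ with $u(0)=v(0)=0$ such that
\[
(1-\alpha)\frac{zf'(z)}{f(z)}+\alpha\Bigl(1+\frac{zf''(z)}{f'(z)}\Bigr)=\varphi(u(z)),\qquad
\Bigl(\frac{wg'(w)}{g(w)}\Bigr)^{\beta}\Bigl(1+\frac{wg''(w)}{g'(w)}\Bigr)^{1-\beta}=\psi(v(w)).
\]
Introducing the functions $p_1(z)=(1+u(z))/(1-u(z))=1+c_1z+c_2z^2+\cdots$ and $p_2(w)=(1+v(w))/(1-v(w))=1+b_1w+b_2w^2+\cdots$ with positive real part, so that $|c_i|\le2$ and $|b_i|\le2$, the right-hand sides admit the expansions \eqref{var1} and \eqref{var2}. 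For the left-hand sides I would reuse the expansion of $(1-\alpha)zf'/f+\alpha(1+zf''/f')$ computed in the proof of Theorem \ref{the4} and the expansion of $(wg'/g)^{\beta}(1+wg''/g')^{1-\beta}$ computed in the proof of Theorem \ref{the3}, applied to $g(w)=w-a_2w^2+(2a_2^2-a_3)w^3+\cdots$.

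Comparing coefficients of $z$, $z^2$ and of $w$, $w^2$ then gives
\begin{align*}
(1+\alpha)a_2&=\tfrac12B_1c_1,\\
2(1+2\alpha)a_3-(1+3\alpha)a_2^2&=\tfrac12B_1\bigl(c_2-\tfrac12c_1^2\bigr)+\tfrac14B_2c_1^2,\\
-(2-\beta)a_2&=\tfrac12D_1b_1,\\
\tfrac12(\beta^2-11\beta+16)a_2^2-2(3-2\beta)a_3&=\tfrac12D_1\bigl(b_2-\tfrac12b_1^2\bigr)+\tfrac14D_2b_1^2,
\end{align*}
and the first and third of these yield $b_1=-B_1(2-\beta)c_1/(D_1(1+\alpha))$. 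To bound $|a_2|$, I would eliminate $a_3$ by adding $(3-2\beta)$ times the second equation to $(1+2\alpha)$ times the fourth; the resulting coefficient of $a_2^2$ is $\tfrac12\bigl[(1+2\alpha)(\beta^2-11\beta+16)-2(1+3\alpha)(3-2\beta)\bigr]=\tfrac12\sigma$ with $\sigma=10+14\alpha-7\beta+\beta^2+2\alpha\beta^2-10\alpha\beta$. Substituting $c_1^2=4(1+\alpha)^2a_2^2/B_1^2$ and $b_1^2=4(2-\beta)^2a_2^2/D_1^2$ and collecting the $a_2^2$ terms gives
\[
a_2^2=\frac{B_1^2D_1^2\bigl[(3-2\beta)B_1c_2+(1+2\alpha)D_1b_2\bigr]}{\sigma B_1^2D_1^2-2(1+\alpha)^2(3-2\beta)(B_2-B_1)D_1^2-2(2-\beta)^2(1+2\alpha)(D_2-D_1)B_1^2},
\]
and $|c_2|\le2$, $|b_2|\le2$ then deliver \eqref{res5}.

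For $|a_3|$ I would instead eliminate $a_2^2$ by adding $\tfrac12(\beta^2-11\beta+16)$ times the second equation to $(1+3\alpha)$ times the fourth; the coefficient of $a_3$ is $(1+2\alpha)(\beta^2-11\beta+16)-2(1+3\alpha)(3-2\beta)$, which is again exactly $\sigma$. Writing the two right-hand sides as $\tfrac12B_1c_2+\tfrac14(B_2-B_1)c_1^2$ and $\tfrac12D_1b_2+\tfrac14(D_2-D_1)b_1^2$ and replacing $b_1^2$ by $B_1^2(2-\beta)^2c_1^2/(D_1^2(1+\alpha)^2)$ leads to
\[
\sigma a_3=\frac{B_1}{4}(\beta^2-11\beta+16)c_2+\frac{D_1}{2}(1+3\alpha)b_2+\frac{c_1^2}{4}\Bigl[\tfrac12(\beta^2-11\beta+16)(B_2-B_1)+\frac{(2-\beta)^2(1+3\alpha)B_1^2(D_2-D_1)}{D_1^2(1+\alpha)^2}\Bigr],
\]
and $|c_2|\le2$, $|b_2|\le2$, $|c_1|\le2$ give \eqref{res5i}.

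The only genuinely new part of the argument is verifying that both elimination steps reproduce precisely the advertised polynomial $\sigma$ (once as $\tfrac12\sigma$ multiplying $a_2^2$, once as $\sigma$ multiplying $a_3$); the cancellation of the unwanted coefficient in each case is automatic from the chosen multipliers. Everything else is the same routine bookkeeping already carried out in Theorems \ref{the1}--\ref{the4}, so I anticipate no real obstacle beyond this algebraic check.
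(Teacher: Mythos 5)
Your proposal is correct and follows essentially the same route as the paper: the same subordination set-up, the same four coefficient equations, and the same eliminations of $a_3$ (for the $|a_2|$ bound) and of $a_2^2$ (for the $|a_3|$ bound), with the algebraic check that both eliminations produce the advertised $\sigma$ verifying cleanly. In fact your intermediate expression for $\sigma a_3$, with the factor $\tfrac12(\beta^2-11\beta+16)$ on the $(B_2-B_1)c_1^2$ term, is the one consistent with the stated bound \eqref{res5i} (and with the analogous formula in Theorem \ref{the3}); the paper's own displayed formula omits that $\tfrac12$, which appears to be a typographical slip.
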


\begin{proof}
Let $f\in \mathcal{M}(\alpha,  \varphi)$ and $g\in \mathcal{L}(\beta,  \psi)$, $g=f^{-1}$. Then there are analytic functions
  $u, v : \mathbb{D} \rightarrow \mathbb{D}$, with $u(0)= v(0)= 0$,
  satisfying
  \begin{equation}
    \label{newth5} (1-\alpha)\frac{zf'(z)}{f(z)}+\alpha\left(1+\frac{ zf''(z)}{f'(z)}\right) = \varphi(u(z)),
\quad  \left(\frac{wg'(w)}{g(w)}\right)^\beta \left(1+\frac{ wg''(w)}{g'(w)}\right)^{1-\beta}= \psi(v(w)),
  \end{equation}
    Since
  \[ (1-\alpha)\frac{zf'(z)}{f(z)}+\alpha\left(1+\frac{ zf''(z)}{f'(z)}\right)
  = 1 + (1+\alpha) a_2 z +(2(1+2\alpha) a_3 - (1+3\alpha)a_2^2)z^2 +
     \cdots \]
  and
 \begin{align*}&\left(\frac{wg'(w)}{g(w)}\right)^\beta \left(1+\frac{ w
   g''(w)}{g'(w)}\right)^{1-\beta} \\&= 1 - (2-\beta) a_2 w +\Big((8(1-\beta)+\frac{1}{2}\beta(\beta+5))a_2^2 - 2(3-2\beta) a_3\Big)w^2 +
     \cdots, \end{align*}
     then {\eqref{var1}}, {\eqref{var2}} and {\eqref{newth5}} yield
  \begin{equation}
    \label{eq5.1n} a_2(1+\alpha) = \frac{1}{2} B_1 c_1,
  \end{equation}
  \begin{equation}
    \label{eq5.2n} 2(1+2\alpha) a_3 -(1+3\alpha) a_2^2 = \frac{1}{2} B_1 \left(c_2 - \frac{c_1^2}{2}
    \right)+ \frac{1}{4} B_2 c_1^2,
  \end{equation}
  \begin{equation}
    \label{eq5.3n} - (2-\beta)a_2 = \frac{1}{2} D_1 b_1
  \end{equation}
  and
  \begin{equation}
    \label{eq5.4n} [8(1-\beta)+\frac{\beta}{2}(\beta+5)] a_2^2 - 2(3-2\beta) a_3 = \frac{1}{2} D_1 \left(b_2 -
    \frac{b_1^2}{2} \right)+ \frac{1}{4} D_2 b_1^2 .
  \end{equation}
 It follows from {\eqref{eq5.1n}} and {\eqref{eq5.3n}} that
  \begin{equation}
    \label{eq5.5} b_1 = -\frac{B_1 (2-\beta)}{D_1(1+\alpha)} c_1.
  \end{equation}
   Equations {\eqref{eq5.1n}}, {\eqref{eq5.2n}}, {\eqref{eq5.4n}} and
 {\eqref{eq5.5}} lead to
  \[ a_2^2 = \frac{B_1^2 D_1^2[B_1(3-2\beta)c_2+D_1(1+2\alpha)b_2]}{\sigma B_1^2 D_1^2- 2(1+\alpha)^2 (3-2\beta)(B_2 -B_1)D_1^2-2(2-\beta)^2(1+2\alpha)(D_2 -D_1)B_1^2}, \]
  which, in view of the well-known inequalities $|b_2 | \leq 2$ and
  $|c_2 | \leq 2$ for functions
  with positive real part,  gives us the desired estimate on $|a_2 |$ as asserted in
  {\eqref{res5}}.

  By using  {\eqref{eq5.2n}}, {\eqref{eq5.4n}} and {\eqref{eq5.5}}  lead to
  \begin{align*} {\sigma} a_3 &= \frac{B_1}{4}(\beta^2-11\beta+16)c_2+\frac{D_1}{2}(1+3\alpha)b_2 +\frac{c_1^2}{4}\Big [(\beta^2-11\beta+16)(B_2-B_1) \\&\quad + \frac{(2-\beta)^2 (1+3\alpha) B_1^2(D_2-D_1)}{D_1^2(1+\alpha)^2}\Big]\end{align*}
  and this yields the estimate given in {\eqref{res5i}}.
\end{proof}

\begin{theorem}\label{the6}
   Let $f\in\sigma$ and $g=f^{-1}$. If
  $f\in \mathcal{L}(\alpha,  \varphi)$, $g\in \mathcal{L}(\beta,  \psi)$,
  then
  \begin{equation}
    \label{res6} |a_2 | \leq \frac{B_1 D_1\sqrt{2[B_1(3-2\beta)+D_1 (3-2\alpha)]}}{\sqrt{|\sigma B_1^2 D_1^2- 2(2-\alpha)^2 (3-2\beta)(B_2 -B_1)D_1^2-2(2-\beta)^2(3-2\alpha)(D_2 -D_1)B_1^2|}}\end{equation}
    and \begin{align}
    \label{res6i} 2|\sigma a_3 | &\leq B_1(\beta^2-11\beta+16) +D_1(8-5\alpha-\alpha^2)+(\beta^2-11\beta+16)|B_2-B_1|\notag \\&\quad+\frac{(2-\beta)^2 (\alpha^2+5\alpha-8) B_1^2|D_2-D_1|}{D_1^2(2-\alpha)^2}
  \end{align} where $\sigma:=24+3\alpha^2+3\beta^2-17\alpha-17\beta-2\beta\alpha^2-2\alpha\beta^2-12\alpha\beta$.
\end{theorem}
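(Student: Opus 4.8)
The plan is to follow verbatim the scheme of Theorems~\ref{the1}--\ref{the5}. First I would record that, since $f\in\mathcal{L}(\alpha,\varphi)$ and $g=f^{-1}\in\mathcal{L}(\beta,\psi)$, there are analytic $u,v:\mathbb{D}\to\mathbb{D}$ with $u(0)=v(0)=0$ and
\[
\left(\frac{zf'(z)}{f(z)}\right)^{\alpha}\left(1+\frac{zf''(z)}{f'(z)}\right)^{1-\alpha}=\varphi(u(z)),\qquad
\left(\frac{wg'(w)}{g(w)}\right)^{\beta}\left(1+\frac{wg''(w)}{g'(w)}\right)^{1-\beta}=\psi(v(w)),
\]
and I would introduce $p_1,p_2$ of positive real part exactly as in \eqref{equ}--\eqref{eqv}, so that $|b_i|\le2$, $|c_i|\le2$. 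The one genuinely new ingredient is the Maclaurin expansion of the $\mathcal{L}$-operator: a direct binomial-series computation (using $g(w)=w-a_2w^2+(2a_2^2-a_3)w^3+\cdots$ for the second identity) gives
\[
\left(\frac{zf'(z)}{f(z)}\right)^{\alpha}\left(1+\frac{zf''(z)}{f'(z)}\right)^{1-\alpha}=1+(2-\alpha)a_2z+\Big(2(3-2\alpha)a_3+\tfrac12(\alpha^2+5\alpha-8)a_2^2\Big)z^2+\cdots,
\]
while the $g$-side is precisely the expansion already used in the proofs of Theorems~\ref{the3} and \ref{the5}. Comparing coefficients of $z$ and $z^2$ with \eqref{var1} and \eqref{var2} then produces the four relations
\[
(2-\alpha)a_2=\tfrac12B_1c_1,\qquad 2(3-2\alpha)a_3+\tfrac12(\alpha^2+5\alpha-8)a_2^2=\tfrac12B_1\big(c_2-\tfrac{c_1^2}{2}\big)+\tfrac14B_2c_1^2,
\]
\[
-(2-\beta)a_2=\tfrac12D_1b_1,\qquad \tfrac12(\beta^2-11\beta+16)a_2^2-2(3-2\beta)a_3=\tfrac12D_1\big(b_2-\tfrac{b_1^2}{2}\big)+\tfrac14D_2b_1^2 .
\]

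From the two linear relations I would deduce $b_1=-\dfrac{(2-\beta)B_1}{(2-\alpha)D_1}c_1$, whence $c_1^2=\dfrac{4(2-\alpha)^2}{B_1^2}a_2^2$ and $b_1^2=\dfrac{4(2-\beta)^2}{D_1^2}a_2^2$. To bound $a_2$, I would multiply the second relation by $(3-2\beta)$, the fourth by $(3-2\alpha)$ and add; the $a_3$-terms cancel, and after inserting these expressions for $c_1^2,b_1^2$ the identity $(3-2\beta)(\alpha^2+5\alpha-8)+(3-2\alpha)(\beta^2-11\beta+16)=\sigma$ collapses everything to
\[
a_2^2=\frac{B_1^2D_1^2\big[(3-2\beta)B_1c_2+(3-2\alpha)D_1b_2\big]}{\sigma B_1^2D_1^2-2(2-\alpha)^2(3-2\beta)(B_2-B_1)D_1^2-2(2-\beta)^2(3-2\alpha)(D_2-D_1)B_1^2},
\]
and $|c_2|,|b_2|\le2$ gives \eqref{res6}. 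For \eqref{res6i} I would instead eliminate $a_2^2$ by taking the second relation times $(\beta^2-11\beta+16)$ minus the fourth times $(\alpha^2+5\alpha-8)$; the coefficient of $a_3$ then becomes $2\sigma$ by the same identity, and replacing $b_1^2$ through $c_1^2$ leaves
\[
2\sigma a_3=\tfrac12(\beta^2-11\beta+16)B_1c_2+\tfrac12(8-5\alpha-\alpha^2)D_1b_2+\frac{c_1^2}{4}\Big[(\beta^2-11\beta+16)(B_2-B_1)+\frac{(8-5\alpha-\alpha^2)(2-\beta)^2B_1^2(D_2-D_1)}{(2-\alpha)^2D_1^2}\Big];
\]
then $|c_1|,|c_2|,|b_2|\le2$ yields \eqref{res6i}.

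The hard (really only tedious) part will be the two symbolic computations: expanding the product of fractional powers $\big(zf'/f\big)^{\alpha}\big(1+zf''/f'\big)^{1-\alpha}$ through the $z^2$-term to obtain the coefficient $2(3-2\alpha)a_3+\tfrac12(\alpha^2+5\alpha-8)a_2^2$, and verifying the polynomial identity $(3-2\beta)(\alpha^2+5\alpha-8)+(3-2\alpha)(\beta^2-11\beta+16)=\sigma$; everything else is the same bookkeeping as in Theorems~\ref{the3}--\ref{the5}. As in those theorems, one tacitly assumes that the relevant denominators are nonzero (in particular $\sigma\neq0$), so that the closed forms for $a_2^2$ and $a_3$ are legitimate.
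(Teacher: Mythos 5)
Your proposal follows the paper's proof step for step: the same subordination set-up, the same Maclaurin expansions of the two $\mathcal{L}$-operators (your coefficient $\tfrac12(\alpha^2+5\alpha-8)a_2^2$ agrees with the paper's $\tfrac12[(\alpha-2)^2-3(4-3\alpha)]a_2^2$), the same four coefficient relations, and the same eliminations; your displayed formulas for $a_2^2$ and for $2\sigma a_3$ are exactly the ones appearing in the paper's proof. The only substantive caution concerns the step you deferred as ``tedious'': if you actually expand $(3-2\beta)(\alpha^2+5\alpha-8)+(3-2\alpha)(\beta^2-11\beta+16)$ you get $24+3\alpha^2+3\beta^2-17\alpha-17\beta-2\alpha^2\beta-2\alpha\beta^2+12\alpha\beta$, whose $\alpha\beta$-coefficient is $+12$, not the $-12$ in the theorem's definition of $\sigma$; so the identity you invoke is false for the printed $\sigma$ whenever $\alpha\beta\neq0$. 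Your derivation is the correct one and the printed $\sigma$ appears to carry a sign typo: at $\alpha=\beta=1$ the class $\mathcal{L}(1,\varphi)$ is $\mathcal{S^*}(\varphi)$ and the theorem must collapse to the Ma--Minda bi-starlike bound, which forces $\sigma=4$ there; your sum gives $4$, while the printed $\sigma$ gives $-20$. (A similar sign slip occurs in \eqref{res6i}, where the triangle inequality applied to your formula produces the factor $|8-5\alpha-\alpha^2|$ rather than $\alpha^2+5\alpha-8$.) Apart from flagging these discrepancies, your argument is complete and correct, with the same tacit assumption as the paper that $\sigma$ and the other denominators are nonzero.
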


\begin{proof}
Let $f\in \mathcal{L}(\alpha,  \varphi)$ and $g\in \mathcal{L}(\beta,  \psi)$, $g=f^{-1}$. Then there are analytic functions
  $u, v : \mathbb{D} \rightarrow \mathbb{D}$, with $u(0)= v(0)= 0$,
  satisfying
  \begin{equation}
    \label{newth6} \left(\frac{zf'(z)}{f(z)}\right)^\alpha \left(1+\frac{ zf''(z)}{f'(z)}\right)^{1-\alpha} = \varphi(u(z)), \quad  \left(\frac{wg'(w)}{g(w)}\right)^\beta \left(1+\frac{ w
   g''(w)}{g'(w)}\right)^{1-\beta}= \psi(v(w)),
  \end{equation}
    Since
  \begin{align*} &\left(\frac{zf'(z)}{f(z)}\right)^\alpha \left(1+\frac{ zf''(z)}{f'(z)}\right)^{1-\alpha} \\&= 1 + (2-\alpha) a_2 z +\left(2(3-2\alpha) a_3
  +  \frac{(\alpha-2)^2 -3(4-3\alpha)}{2}a_2^2\right)z^2 +
     \cdots \end{align*}
  and
 \begin{align*}&\left(\frac{wg'(w)}{g(w)}\right)^\beta \left(1+\frac{ w
   g''(w)}{g'(w)}\right)^{1-\beta} \\&= 1 - (2-\beta) a_2 w +\Big((8(1-\beta)+\frac{1}{2}\beta(\beta+5))a_2^2 - 2(3-2\beta) a_3\Big)w^2 +
     \cdots, \end{align*}
     then {\eqref{var1}}, {\eqref{var2}} and {\eqref{newth6}} yield
  \begin{equation}
    \label{eq6.1n} a_2(2-\alpha) = \frac{1}{2} B_1 c_1,
  \end{equation}
  \begin{equation}
    \label{eq6.2n} 2(3-2\alpha) a_3+\frac{1}{2}[(\alpha-2)^2-3(4-3\alpha)] a_2^2 = \frac{1}{2} B_1 \left(c_2 - \frac{c_1^2}{2}
    \right)+ \frac{1}{4} B_2 c_1^2,
  \end{equation}
  \begin{equation}
    \label{eq6.3n} - (2-\beta)a_2 = \frac{1}{2} D_1 b_1
  \end{equation}
  and
  \begin{equation}
    \label{eq6.4n} [8(1-\beta)+\frac{\beta}{2}(\beta+5)] a_2^2 - 2(3-2\beta) a_3 = \frac{1}{2} D_1 \left(b_2 -
    \frac{b_1^2}{2} \right)+ \frac{1}{4} D_2 b_1^2 .
  \end{equation}
 It follows from {\eqref{eq6.1n}} and {\eqref{eq6.3n}} that
  \begin{equation}
    \label{eq6.5} b_1 = -\frac{B_1 (2-\beta)}{D_1(2-\alpha)} c_1.
  \end{equation}
   Equations {\eqref{eq6.1n}}, {\eqref{eq6.2n}}, {\eqref{eq6.4n}} and
 {\eqref{eq6.5}} lead to
  \[ a_2^2 = \frac{B_1^2 D_1^2[B_1(3-2\beta)c_2+D_1(3-2\alpha)b_2]}{\sigma B_1^2 D_1^2- 2(2-\alpha)^2 (3-2\beta)(B_2 -B_1)D_1^2-2(2-\beta)^2(3-2\alpha)(D_2 -D_1)B_1^2}, \]
  which, in view of the well-known inequalities $|b_2 | \leq 2$ and
  $|c_2 | \leq 2$ for functions
  with positive real part,  gives us the desired estimate on $|a_2 |$ as asserted in
  {\eqref{res6}}.

  By using  {\eqref{eq6.2n}}, {\eqref{eq6.4n}} and {\eqref{eq6.5}}  lead to
  \begin{align*} {2\sigma}a_3 &= \frac{B_1}{2}(\beta^2-11\beta+16)c_2 +\frac{D_1}{2}(8-5\alpha-\alpha^2)b_2+\frac{c_1^2}{4}\Big[(\beta^2-11\beta+16)(B_2-B_1)\\&\quad +\frac{(2-\beta)^2 (\alpha^2+5\alpha-8) B_1^2(D_2-D_1)}{D_1^2(2-\alpha)^2}\Big]\end{align*}
  and this yields the estimate given in {\eqref{res6i}}.
\end{proof}

\begin{remark}
When $\beta=\alpha$ and $\psi=\varphi$, Theorem \ref{the6} reduces to \cite[Theorem 2.4]{rmabi}.
\end{remark}


\end{document}